\begin{document}
\def\K{\mathbb{K}}
\def\R{\mathbb{R}}
\def\C{\mathbb{C}}
\def\Z{\mathbb{Z}}
\def\Q{\mathbb{Q}}
\def\D{\mathbb{D}}
\def\N{\mathbb{N}}
\def\T{\mathbb{T}}
\def\P{\mathbb{P}}
\def\A{\mathscr{A}}
\def\CC{\mathscr{C}}
\renewcommand{\theequation}{\thesection.\arabic{equation}}
\newtheorem{theorem}{Th\'eor\`eme}[section]
\newtheorem{lemma}{Lemme}[section]
\newtheorem{corollary}{Corollaire}[section]
\newtheorem{prop}{Proposition}[section]
\newtheorem{definition}{D\'efinition}[section]
\newtheorem{remark}{Remarque}[section]
\newtheorem{example}{Exemple}[section]
\newtheorem{notation}{Notation}
\newtheorem{con}{Cons\'equence}
\bibliographystyle{plain}
                                    
\title[R\'esolution du $\partial \bar{\partial}$ pour les formes ayant une valeur au bord au sens des courants~~]{\textbf {R{\'e}solution du $\partial \bar{\partial}$ pour les formes diff{\'e}rentielles ayant une valeur au bord au sens des courants d{\'e}finies dans un domaine contractile fortement pseudoconvexe d'une vari{\'e}t{\'e} complexe} }
\author[ S.\  Sambou   \& S.\  Sambou ]
{Souhaibou Sambou  \& Salomon Sambou}
\address{D{\'e}partement de Math{\'e}matiques\\UFR des Sciences et Technologies \\ Universit{\'e} Assane Seck de Ziguinchor, BP: 523 (S{\'e}n{\'e}gal)}

\email{sambousouhaibou@yahoo.fr  \& ssambou@univ-zig.sn }

\subjclass[2010]{32F32}
\date{}
\maketitle
       \begin{abstract}
On r{\'e}sout le $\partial \bar{\partial}$ pour les formes diff{\'e}rentielles admettant une valeur au bord au sens des courants d{\'e}finie sur un domaine contractile fortement pseudoconvexe d'une vari{\'e}t{\'e} complexe .
\vskip 2mm
\noindent
{\normalsize A}{\tiny BSTRACT.}
We solve the $\partial \bar{\partial}$-problem for a form with distribution boundary value on a strongly pseudoconvex contractible domain of  a complex manifold.
\vskip 2mm
\noindent
\keywords{{\bf Mots cl{\'e}s:}  L'op{\'e}rateur $\partial \bar{\partial}$, Cohomologie de De Rham, Courant prolongeable, Valeur au bord, Formes \`a croissance polynomiale. }
\vskip 1.3mm
\noindent
\textit{Classification math{\'e}matique 2010~:}  32F32.
\end{abstract} 
\section*{Introduction}
Soient $M$ une vari{\'e}t{\'e} diff{\'e}rentiable et $\Omega \subset M$ un domaine. Dans ce travail, on cherche {\`a} r{\'e}soudre l'{\'e}quation $\partial \bar{\partial}$ pour les formes diff{\'e}rentielles ayant une valeur au bord au sens des courants.\\
Nous suivons les m{\^e}mes d{\'e}marches de r{\'e}solution de $[6]$ o{\`u} le m{\^e}me probl{\`e}me a {\'e}t{\'e} r{\'e}solu pour des ouverts de $\mathbb{C}^n$. Ces r{\'e}sultats de $[6]$ peuvent {\^e}tre vu comme les analogues locaux de ce travail. Il est connu qu'on ne passe  par recollement des r{\'e}sultats locaux aux r{\'e}sultats globaux, la partition de l'unit{\'e} n'{\'e}tant pas holomorphe. Nous allons passer du local au global par l'utilisation classique de la th{\'e}orie des faisceaux. Le r{\'e}sultat obtenu dans cette direction est le suivant:
\begin{theorem} \label{zhou}
Soient $M$ une vari{\'e}t{\'e} analytique complexe de dimension $n$ et $\Omega \subset \subset M$ un domaine contractile compl{\'e}tement strictement pseudoconvexe {\`a} bord lisse de classe $C^\infty$. Supposons que $H^j(b\Omega)$ trivial pour $1 \leq j \leq 2n-2$. Alors l'{\'e}quation $\partial \bar{\partial} u = f$, o{\`u} $f$ est une $(p,q)$-forme diff{\'e}rentielle de classe $C^\infty$, $d$-ferm{\'e}e ayant une valeur au bord au sens des courants avec $1 \leq p \leq n$ et $1 \leq q \leq n$ admet une solution $u$ qui est une $(p-1,q-1)$-forme diff{\'e}rentielle de classe $C^\infty$ avec une valeur au bord au sens des courants.
\end{theorem}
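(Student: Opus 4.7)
Ma stratégie serait de ramener $\partial\bar{\partial} u = f$ à des résolutions successives des équations $d$, $\bar{\partial}$ et $\partial$, en restant dans la catégorie des formes lisses ayant une valeur au bord au sens des courants. Cela suit le schéma de $[6]$ pour le cas local, que l'on globalise à $\Omega \subset M$ par des arguments de faisceaux, la contractibilité jouant le rôle du lemme de Poincaré et l'hypothèse sur $H^j(b\Omega)$ intervenant via les suites exactes liant courants prolongeables, courants sur $\bar{\Omega}$ et courants portés par $b\Omega$.

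\emph{Étape 1.} Puisque $f$ est $d$-fermée, la contractibilité de $\Omega$ fournit, dans la catégorie considérée, une $(p+q-1)$-forme lisse $v$ à valeur au bord au sens des courants telle que $dv = f$ : concrètement, le lemme de Poincaré local établi dans $[6]$ est globalisé au moyen d'une résolution fine (ou acyclique) du faisceau constant par les faisceaux de formes à valeur au bord. Quitte à modifier $v$ par ajout de formes $d$-exactes provenant de résolutions successives de $\bar{\partial}$ et $\partial$ à valeur au bord (acyclicité de Dolbeault sur $\Omega$ fortement pseudoconvexe), on peut supposer que $v = v^{p-1,q} + v^{p,q-1}$.

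\emph{Étape 2.} La pureté de bidegré $(p,q)$ de $f$ et la relation $dv = f$ impliquent alors, par séparation de bidegrés,
\[
\bar{\partial} v^{p-1,q} = 0, \qquad \partial v^{p,q-1} = 0, \qquad f = \partial v^{p-1,q} + \bar{\partial} v^{p,q-1}.
\]
La forte pseudoconvexité de $\Omega$ et l'hypothèse $1 \leq p,q \leq n$ permettent, dans la catégorie voulue, de trouver des $(p-1,q-1)$-formes lisses $\alpha, \beta$ à valeur au bord au sens des courants, telles que $\bar{\partial}\alpha = v^{p-1,q}$ et $\partial\beta = v^{p,q-1}$. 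Posant $u = \alpha - \beta$, on vérifie
\[
\partial\bar{\partial} u = \partial\bar{\partial}\alpha - \partial\bar{\partial}\beta = \partial v^{p-1,q} + \bar{\partial}\partial\beta = \partial v^{p-1,q} + \bar{\partial} v^{p,q-1} = f,
\]
d'où le résultat.

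\emph{Obstacle principal.} Le point délicat sera le passage du local au global : construire rigoureusement les faisceaux adaptés aux formes à valeur au bord au sens des courants sur $M$, vérifier qu'ils résolvent le faisceau constant sur $\Omega$, et exploiter $H^j(b\Omega) = 0$ pour $1 \leq j \leq 2n-2$ pour contrôler les obstructions de bord apparaissant dans les suites exactes longues. Les versions « à valeur au bord » du théorème de Dolbeault utilisées aux étapes 1 (ajustement de $v$) et 2 (résolution finale) constituent le cœur technique et dépendent elles aussi de cette globalisation des résultats locaux de $[6]$ ; la conservation de la régularité $C^\infty$ et de la valeur au bord à chaque étape devra être vérifiée soigneusement.
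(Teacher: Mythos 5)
Your proposal follows essentially the same route as the paper: solve $dv=f$ using the contractibility of $\Omega$ and the triviality of $H^j(b\Omega)$ (the paper's Th\'eor\`eme \ref{1}), reduce to the two pure components $v^{p-1,q}$ and $v^{p,q-1}$, observe $\bar{\partial}v^{p-1,q}=0$ and $\partial v^{p,q-1}=0$ by bidegree, solve these with the boundary-value $\bar{\partial}$-resolution on a strictly pseudoconvex domain (the paper's Corollaire \ref{4}), and combine via $u=\alpha-\beta$ using $\partial\bar{\partial}=-\bar{\partial}\partial$. The final computation and the identified technical obstacles (sheaf-theoretic globalization of the local results of $[6]$, preservation of regularity and boundary values) match the paper's argument, so the proposal is correct and not substantively different.
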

Nous consid{\'e}rons {\'e}galement la version concave du th{\'e}or{\`e}me pr{\'e}c{\'e}dent et on obtient:
\begin{theorem} \label{aff}
Soient $M$ une vari{\'e}t{\'e} analytique complexe de dimension $n$ et $D \subset \subset M$ un domaine contractile compl{\'e}tement strictement pseudoconvexe {\`a} bord lisse de classe $C^\infty$. Supposons que $M$ est une extension $n-1$-convexe de $D$ et une extension contractile de $D$ avec $H^j(bD)$ trivial  pour $1 \leq j \leq 2n-2$. Posons $\Omega = M \setminus \bar{D}$. Si $\stackrel{\circ}{\bar{\Omega}} = \Omega$ et $f$ est une $(p,q)$-forme diff{\'e}rentielle de classe $C^\infty$, $d$-ferm{\'e}e et admettant une valeur au bord au sens des courants sur $\Omega$ avec $1 \leq p \leq n-1$ et $1 \leq q \leq n-1$, alors il existe une $(p-1,q-1)$-forme diff{\'e}rentielle $u$ de classe $C^\infty$ d{\'e}finie sur $\Omega$ ayant une valeur au bord au sens des courants telle que $\partial \bar{\partial} u = f$.
\end{theorem}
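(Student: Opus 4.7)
The strategy will mirror the proof of Theorem \ref{zhou}, adapted to the concave side $\Omega=M\setminus\bar D$, via the classical $\partial\bar\partial$-lemma argument: first produce a $d$-primitive of $f$ with distribution boundary value, decompose it by bidegree, and then use $\bar\partial$- and $\partial$-solvability to upgrade the primitive to a $\partial\bar\partial$-primitive.

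In a first step I would construct $\gamma$, of degree $p+q-1$ on $\Omega$, of class $C^\infty$ and admitting a distribution boundary value, with $d\gamma=f$. This requires the vanishing of the de Rham cohomology of $\Omega$ in degree $p+q$, within the category of forms with current boundary value. Applying Mayer-Vietoris to $M=\bar D\cup\bar\Omega$ with intersection $bD$, the contractibility of $M$ as an extension of $D$ (together with that of $D$) and the hypothesis $H^j(bD)=0$ for $1\le j\le 2n-2$ yield $H^k(\bar\Omega)=0$ for $1\le k\le 2n-2$, which covers the required range; the version with boundary values should then follow from the sheaf-theoretic machinery already set up for Theorem \ref{zhou}.

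Decomposing $\gamma=\gamma^{p-1,q}+\gamma^{p,q-1}$ by bidegree, the equation $d\gamma=f$ (of pure type $(p,q)$) forces $\bar\partial\gamma^{p-1,q}=0$, $\partial\gamma^{p,q-1}=0$ and $\partial\gamma^{p-1,q}+\bar\partial\gamma^{p,q-1}=f$. Next, I would solve $\bar\partial\eta=\gamma^{p-1,q}$ and $\partial\zeta=\gamma^{p,q-1}$ for $\eta,\zeta$ smooth $(p-1,q-1)$-forms on $\Omega$ admitting distribution boundary values. Setting $u=\eta-\zeta$ and using $\partial\bar\partial=-\bar\partial\partial$ one obtains
\[
\partial\bar\partial u=\partial\gamma^{p-1,q}+\bar\partial\gamma^{p,q-1}=f,
\]
which is the desired equation.

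The main obstacle will be the Dolbeault solvability of this second step on the concave region $\Omega$ in the distribution-boundary-value category. This is where the hypothesis that $M$ is an $(n-1)$-convex extension of $D$ enters: via the local results of $[6]$ and a sheaf-cohomology argument analogous to the one used for Theorem \ref{zhou}, this should translate into the required vanishing of $H^{p-1,q}_{\bar\partial}(\Omega)$ and of the corresponding $\partial$-cohomology for $1\le p\le n-1$ and $1\le q\le n-1$; the upper bound $q\le n-1$ matches precisely the regime where Andreotti-Grauert-type vanishing for $(n-1)$-concave configurations applies. Verifying that the primitives $\gamma$, $\eta$, $\zeta$ can be chosen with the required smoothness and boundary-value regularity — rather than only as currents — is the delicate sheaf-theoretic point.
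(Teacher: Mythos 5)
Your proposal follows essentially the same route as the paper: the paper's own proof first produces a $d$-primitive $h$ of $f$ with boundary value (its Lemme \ref{aff1}, proved via extendable currents and the decomposition $S=RS+AdS+dAS$ rather than your Mayer--Vietoris sketch, but serving exactly the role of your $\gamma$), decomposes it by bidegree, solves $\bar\partial h_1$ and $\partial h_2$ on the concave side via Th\'eor\`eme \ref{djilo}, and sets $u=g_1-g_2$. The sign bookkeeping and the identification of the concave $\bar\partial$-solvability as the point where the $(n-1)$-convex extension hypothesis enters are both as in the paper, so the argument is correct and matches.
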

\section{Pr{\'e}liminaires et notations}
\begin{definition}{(voir $[2]$)}
Soient $X$ une vari{\'e}t{\'e} diff{\'e}rentiable de dimension $n$ et $\Omega \subset \subset X$ un domaine contractile. On dit que $X$ est une extension contractile de $\Omega$, s'il existe une suite $(\Omega_n)_n$ exhaustive de domaines contractiles telle que\\
$\forall n \in \mathbb{N}$,  $\Omega \subset \subset \Omega_n \subset \subset X$.
\end{definition}
\begin{example}
Quand $X = \mathbb{C}^n$, alors $\mathbb{C}^n$ est une extension contractile de la boule unit{\'e} $\mathcal{B}$.
\end{example}
\begin{definition}
Soit $X$ une vari{\'e}t{\'e} analytique complexe de dimension $n$.
\begin{itemize}
\item[(1)] Une fonction $\rho$ de classe $C^\infty$ sur $X$ est dite $n$-convexe (respectivement $n$-concave) si sa forme de L{\'e}vi poss{\`e}de $n$ valeurs propres strictement positives (respectivement strictement n{\'e}gatives).
\item[(2)] Soit $\Omega \subset \subset X$ un domaine relativement compact de $X$. $\Omega$ est dit compl{\'e}tement strictement pseudoconvexe s'il existe une fonction $n$-convexe $\varphi$ d{\'e}finie dans un voisinage $U_{\bar{\Omega}}$ de $\bar{\Omega}$ telle que\\ $\Omega = \{ z \in U_{\bar{\Omega}}~\vert~ \varphi(z) < 0 \}$.
\item[(3)] $X$ est une extension $(n-1)$-concave de $\Omega$ si :
\begin{itemize}
\item[(i)] $\Omega$ rencontre toutes les composantes connexes de $X$.
\item[(ii)] Il existe une fonction $n$-concave $\varphi$ d{\'e}finie sur un voisinage $U$ de $X \setminus \Omega$ telle que $\Omega \cap U = \{ z \in U~\vert~ \varphi(z) <0\}$ et pour tout r{\'e}el $\alpha$ avec $0< \alpha < \sup_{z \in U} \varphi(z)$ l'ensemble $\{z \in U~\vert~0 \leq \varphi(z) \leq \alpha\}$ est compact.
\end{itemize}
\end{itemize}
\end{definition}
\begin{definition}   
Soient $M$ une vari{\'e}t{\'e} diff{\'e}rentiable et $\Omega \subset \subset M$ un domaine {\`a} bord lisse de classe $C^\infty$ de fonction d{\'e}finissante $\rho$. Posons $\Omega_\varepsilon = \{z \in \Omega / \rho(z)<-\varepsilon\}$ et $b\Omega_\varepsilon$ le bord de $\Omega_\varepsilon$.\\
Soit $f$ une fonction de classe $C^\infty$ sur $\Omega$. On dit que $f$ admet une valeur au bord au sens des distributions, s'il existe une distribution $T$ d{\'e}finie sur le bord $b\Omega$ de $\Omega$ telle que pour toute fonction $\varphi \in C^\infty (b\Omega)$, on ait:\\
\[ \lim_{\varepsilon \rightarrow 0} \int_{b\Omega_\varepsilon} f \varphi_\varepsilon d\sigma = <T,\varphi>\]
o{\`u} $\varphi_\varepsilon = i_{\varepsilon}^* \tilde{\varphi}$  avec $\tilde{\varphi}$ une extension de $\varphi$ {\`a} $\Omega$ et $i_{\varepsilon} : b\Omega_\varepsilon  \rightarrow b\Omega$ l'injection canonique; $d\sigma$ d{\'e}signe l'{\'e}l{\'e}ment de volume.\\
Une forme diff{\'e}rentielle de classe $C^\infty$ sur $\Omega$ admet une valeur au bord au sens des courants si ses coefficients ont une valeur au bord au sens des distributions.
\end{definition}
\begin{definition}
On dit qu'une fonction $f$ de classe $C^\infty$ d{\'e}finie sur $\Omega$ est {\`a} croissance polynomiale d'ordre $N \geq 0$, s'il existe une constante $C$ telle que pour tout $z \in \Omega$, on a:\\
\begin{center}
$\vert f(z) \vert \leq \frac{C}{d(z)^N}$
\end{center}
o{\`u}  $d(z)$ d{\'e}signe la distance de $z$ au bord de $\Omega$ .
\end{definition}
\begin{definition}
Soit $\Omega \subset M$ un domaine d'un vari{\'e}t{\'e} diff{\'e}rentiable $M$. Un courant $T$ d{\'e}fini sur $\Omega$ est dit prolongeable si $T$ est la restriction {\`a} $\Omega$ d'un courant $\tilde{T}$ d{\'e}fini sur $M$.\\
Il est connu d'apr{\`e}s $[3]$, que si $\stackrel{\circ}{\bar{\Omega}} = \Omega$, alors les courants prolongeables sont les duaux topologiques des formes diff{\'e}rentielles {\`a} support compact sur $\bar{\Omega}$.\\
Nous aurons dans toute la suite {\`a} consid{\'e}rer des domaines $\Omega$ v{\'e}rifiant $\stackrel{\circ}{\bar{\Omega}} = \Omega$.
\end{definition}
\begin{notation}
Soit $\Omega$ un ouvert d'une vari{\'e}t{\'e} analytique complexe $M$ de dimension $n$.\\
On note $O_\Omega$ le faisceau des fonctions holomorphes sur $\Omega$, $\check{O}_\Omega$ celui sur $\bar{\Omega}$ des germes de fonctions holomorphes sur $\Omega$ ayant une valeur au bord au sens des distributions et $\mathcal{F}^{0,r}(\Omega)$ celui sur $\bar{\Omega}$ des $(0,r)$-formes diff{\'e}rentielles sur $\Omega$ ayant une valeur au bord au sens des courants.\\
On note $\check{H}^r(\Omega)$ le $r^{i\grave{e}me}$ groupe de cohomologie de De Rham des courants prolongeables d{\'e}finis sur $\Omega$, $H^r(\Omega)$ le $r^{i\grave{e}me}$ groupe de cohomologie de De Rham des formes diff{\'e}rentiables de classe $C^\infty$ d{\'e}finies sur $\Omega$ et $H^r(b\Omega)$ le $r^{i\grave{e}me}$ groupe de cohomologie de De Rham des formes diff{\'e}rentiables de classe $C^\infty$ d{\'e}finies sur $b\Omega$. Le $r^{i\grave{e}me}$ groupe de cohomologie de De Rham des formes diff{\'e}rentielles ayant une valeur au bord au sens des courants sur  $\Omega$ est not{\'e} $\tilde{H}^r(\Omega)$.\\
On note $H^r(\Omega,O_\Omega)$ (respectivement $H^r(\Omega,\check{O}_\Omega)$) le $r^{i\grave{e}me}$ groupe de cohomologie de $\check{C}$ech des formes diff{\'e}rentielles d{\'e}finies sur $\Omega$ {\`a} valeur dans le faisceau $O_\Omega$ (respectivement $\check{O}_\Omega)$).
\end{notation} 
\section{R{\'e}solution de l'{\'e}quation $du=f$}
\begin{theorem} \label{1}
Soient $M$ une vari{\'e}t{\'e} diff{\'e}rentiable de dimension $n$ et $ \Omega \subset \subset M$ un domaine born{\'e} {\`a} bord lisse de classe $C^\infty$. On suppose que $\Omega$ est contractile et $H^j(b\Omega)$ trivial pour $1 \leq j \leq n-2$. Alors l'{\'e}quation $du = f$ o{\`u} $f$ est une $r$-forme diff{\'e}rentielle de classe $C^\infty$ d{\'e}finie sur $\Omega$ ayant une valeur au bord au sens des courants et $d$-ferm{\'e}e a une solution $u$ qui est une $(r-1)$-forme diff{\'e}rentielle de classe $C^\infty$ admettant une valeur au bord au sens des courants pour $1 \leq r \leq n-2$.
\end{theorem}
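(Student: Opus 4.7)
Since $\Omega$ is contractible, the classical de Rham Poincar\'e lemma already produces a smooth primitive $u_0$ on $\Omega$ with $du_0 = f$, so the whole issue is to correct $u_0$ by a $d$-closed $C^\infty$ form with boundary value, in such a way that the corrected primitive itself admits a boundary value in the sense of currents. Equivalently, I would reduce the statement to the cohomology vanishing
\[
\tilde{H}^{r}(\Omega) = 0, \qquad 1 \le r \le n-2,
\]
for the complex of smooth forms on $\Omega$ with boundary value in the sense of currents.

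To attack this vanishing, I would build a Mayer--Vietoris-type long exact sequence
\[
\cdots \to H^{r-1}(\Omega) \to H^{r-1}(b\Omega) \to \tilde{H}^{r}(\Omega) \to H^{r}(\Omega) \to H^{r}(b\Omega) \to \cdots
\]
relating the bv-cohomology to the ordinary de Rham cohomologies of $\Omega$ and of $b\Omega$. The set-up uses a tubular collar $V \cong b\Omega \times (-\delta,0]$ of the boundary inside $\bar{\Omega}$ together with an interior shrinkage $\Omega' \subset\subset \Omega$ such that $\Omega = \Omega' \cup (V \cap \Omega)$: on $\Omega'$ the boundary is not in sight, so the bv-cohomology reduces to ordinary $H^{\bullet}(\Omega')$; on the collar, the retraction onto $b\Omega$ combined with a Poincar\'e lemma for bv forms on a product with an interval should identify $\tilde{H}^{\bullet}(V \cap \Omega)$ with $H^{\bullet}(b\Omega)$; the standard Mayer--Vietoris gluing then yields the displayed sequence. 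Once the sequence is in hand, a short diagram chase closes the argument: contractibility of $\Omega$ kills $H^{r}(\Omega)$ for $r \ge 1$ and $H^{r-1}(\Omega)$ for $r \ge 2$; the hypothesis kills $H^{r-1}(b\Omega)$ in the range $2 \le r \le n-1$; and the edge case $r=1$ is handled directly by integrating the $d$-closed bv $1$-form along paths, using also that the restriction $H^{0}(\Omega) \to H^{0}(b\Omega)$ is an isomorphism for $n \ge 3$.

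The main obstacle I foresee is not the diagram chase but the collar Poincar\'e lemma for bv-forms: on $b\Omega \times (-\delta,0]$ every $d$-closed smooth form with boundary value must be shown to be $d$-exact with a primitive that itself admits a boundary value. The usual homotopy operator along the collar parameter delivers a candidate primitive, but one has to control its limiting behaviour uniformly as that parameter tends to $0$ in order to preserve the distributional trace under integration. This delicate regularity check at the boundary, together with a clean identification of the restriction map $\tilde{H}^{\bullet}(\Omega) \to H^{\bullet}(b\Omega)$ at the level of classes, is where the technical work concentrates; once it is in place, the rest of the proof is formal.
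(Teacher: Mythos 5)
Your strategy is genuinely different from the paper's, and as written it does not hold together at its central step. The paper does not use any Mayer--Vietoris or collar argument: it first solves $dS=f$ in the space of \emph{courants prolongeables}, invoking $\check{H}^r(\Omega)=0$ from $[1]$ (this is precisely where the contractibility of $\Omega$ and the triviality of $H^j(b\Omega)$ are consumed), then regularizes the current solution by the homotopy formula $\tilde{S}=R\tilde{S}+Ad\tilde{S}+dA\tilde{S}$ of $[9]$, and finally checks --- via a finite partition of unity and the Newton-kernel estimates of $[7]$ --- that the smooth solution $(R\tilde{S}+Ad\tilde{S})_{\vert\Omega}$ admits a valeur au bord. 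Your reduction of the theorem to the vanishing $\tilde{H}^r(\Omega)=0$ is of course correct (it is a restatement), but the opening appeal to the Poincar\'e lemma is a red herring: a smooth primitive $u_0$ with no boundary value cannot be ``corrected by a $d$-closed form \emph{with} boundary value'', since the difference of a bv primitive and $u_0$ has no reason to admit a boundary value.

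The genuine gap is that the long exact sequence you display is not the one your own construction produces. With the decomposition $\Omega=\Omega'\cup(V\cap\Omega)$ and \emph{your} identification $\tilde{H}^\bullet(V\cap\Omega)\simeq H^\bullet(b\Omega)$, Mayer--Vietoris reads
\begin{equation*}
\cdots\to H^{r-1}(\Omega'\cap V)\to\tilde{H}^r(\Omega)\to H^r(\Omega')\oplus\tilde{H}^r(V\cap\Omega)\to H^r(\Omega'\cap V)\to\cdots,
\end{equation*}
and since the restriction $\tilde{H}^r(V\cap\Omega)\to H^r(\Omega'\cap V)$ is then an isomorphism (both sides being $H^r(b\Omega)$ via slices of the collar), the sequence degenerates to $\tilde{H}^r(\Omega)\simeq H^r(\Omega)$; the sequence you wrote, with $H^{r-1}(b\Omega)$ feeding into $\tilde{H}^r(\Omega)$ and $\tilde{H}^r(\Omega)$ mapping to $H^r(\Omega)$, would instead require $\tilde{H}^\bullet(V\cap\Omega)=0$, contradicting your own collar lemma. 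So either the collar identification or the displayed sequence is wrong, and the diagram chase that follows (which is where you use the hypothesis on $H^j(b\Omega)$) is built on the unjustified one. Note that the degenerate conclusion $\tilde{H}^r(\Omega)\simeq H^r(\Omega)=0$ would actually suffice for the theorem --- indeed it would make the boundary hypothesis superfluous --- but only if the collar Poincar\'e lemma for bv-forms is actually proved, and you explicitly leave that as an acknowledged obstacle rather than an established step (one must show that fiber integration $\int_{-\delta/2}^{t}$ applied to the coefficients preserves the existence of the limits $\lim_{\varepsilon\to 0}\int_{b\Omega_\varepsilon}\cdot\,\varphi_\varepsilon\,d\sigma$, and that the resulting limit is again a distribution). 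As it stands the proposal is internally inconsistent at its key step and its crucial lemma is unproven.
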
 
\begin{proof} 
D'apr\`es $[5]$ si $f$ est une forme diff\'erentielle $d$-ferm\'ee ayant une valeur au bord au sens des courants sur $\Omega$ alors $[f]$ est un courant prolongeable. Puisque $\bar{\Omega}$ est born{\'e}, $[f]$ est d'ordre fini. D'apr\`es $[1]$, $\check{H}^r(\Omega)= 0$, il existe un courant prolongeable $S$ d{\'e}fini sur $\Omega$ tel que $dS = f$. Soit $\tilde{S}$ une extension {\`a} support compact dans $\bar{\Omega}$ de $S$. On a d'apr{\`e}s ($[9]$ page $40$) 
\begin{center}
$\tilde{S} = R\tilde{S} + Ad\tilde{S} + dA\tilde{S}$
\end{center}
$d\tilde{S} =d( R\tilde{S} + Ad\tilde{S})$ et $( R\tilde{S} + Ad\tilde{S})_{\vert \Omega}$ est une autre solution de $du = f$ sur $\Omega$ et est un courant prolongeable. $R\tilde{S}$ est une forme diff{\'e}rentielle de classe $C^\infty$ sur $\Omega$ et $A$ n'augmente pas le support singulier de $d\tilde{S}$. Puisque $d \tilde{S}_{\vert \Omega}= f$ qui est de classe $C^\infty$ alors $Ad\tilde{S}_{\vert \Omega}$ est de classe $C^\infty$. Ainsi $( R\tilde{S} + Ad\tilde{S})_{\vert \Omega}$ est de classe $C^\infty$. Il reste {\`a} montrer que $( R\tilde{S} + Ad\tilde{S})_{\vert \Omega}$ admet une valeur au bord au sens des courants. $R \tilde{S}$ est une forme diff{\'e}rentielle de classe $C^\infty$ donc admet une valeur au bord au sens des courants.\\
Soit $(\varphi_j)_{j \in J}$ une partition de l'unit{\'e} subordonn{\'e}e {\`a} un recouvrement fini $(U_j)_{j \in J}$ de $\bar{\Omega}$ par les ouverts de coordonn{\'e}es locales.\\
On a $Ad \tilde{S} = \displaystyle {\sum_{j \in J} A \varphi_jd \tilde{S} }$ avec $ A \varphi_j d \tilde{S}$ {\`a} support compact sur $U_j$.\\
\begin{center}
$Ad \tilde{S}_{\vert \Omega} = \displaystyle {\sum_{j \in J} (A \varphi_j d \tilde{S})_{\vert \Omega}}$.
\end{center}
Si $U_j \subset \Omega$, alors $ A \varphi_j d \tilde{S}$ est de classe $C^\infty$ {\`a} support compact dans $\Omega$, donc $(A \varphi_j d \tilde{S})_{\vert \Omega}$ admet une valeur au bord au sens des courants.\\
Si $U_j \nsubseteq \Omega$ et $U_j \cap b \Omega \neq \emptyset$; montrons alors que $ A \varphi_j d \tilde{S}$ admet une valeur au bord au sens des courants.\\
Puisque $\varphi_j$ est {\`a} support dans $U_j$ ouvert de coordonn{\'e}es, donc nous sommes ramen{\'e}s {\`a} un domaine born{\'e} de $\mathbb{R}^n$. $ A \varphi_j d \tilde{S}$ est de m{\^e}me nature que l'action du noyau de newton $E(x,y)$ sur $\varphi_j d \tilde{S}$. Puisque $d \tilde{S}$ est d'ordre $l$ {\`a} support compact sur $\bar{\Omega}$ qui prolonge $f$. Donc d'apr{\`e}s $[7]$, $ A \varphi_j d \tilde{S}$ admet une valeur au bord au sens des courants.\\
Puisque $Card(J)< \infty$, donc $\displaystyle {\sum_{j \in J} (A \varphi_j d \tilde{S})_{\vert \Omega}}$ admet une valeur au bord au sens des courants. On a ainsi
\begin{center}
$\tilde{H}^{r}(\Omega) = 0$.
\end{center}
\end{proof}
\section{R{\'e}solution du $\partial \bar{\partial}$ pour les formes diff{\'e}rentielles ayant une valeur au bord au sens des courants}
Comme cons{\'e}quences du th{\'e}or{\`e}me $4.1$ de $[6]$, nous avons le corollaire suivant:
\begin{corollary} \label{4}
Soient $X$ une vari{\'e}t{\'e} analytique complexe de dimension $n$ et $\Omega$ un domaine compl{\'e}tement strictement $(n-1)$-convexe de $X$. Soit $f$ une forme diff{\'e}rentielle de bidegr{\'e} $(0,r)$ d{\'e}finie sur $\Omega$, $\bar{\partial}$-ferm{\'e}e admettant une valeur au bord au sens des courants avec $1 \leq r \leq n-1$. Alors il existe une $(0,r-1)$-forme diff{\'e}rentielle $g$ d{\'e}finie sur $\Omega$ ayant une valeur au bord au sens des courants telle que $\bar{\partial}g = f$.
\end{corollary}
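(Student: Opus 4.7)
The plan is to pass from the local result of Th\'eor\`eme $4.1$ de $[6]$, valid on completely strictement $(n-1)$-convexe open sets of $\mathbb{C}^n$, to the global statement over the complex manifold $X$ by the sheaf-theoretic strategy announced in the introduction.

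First, I would observe that the family of sheaves $\mathcal{F}^{0,\bullet}$ on $\bar{\Omega}$ forms a fine resolution of $\check{O}_\Omega$. Fineness is immediate: multiplication by a $C^\infty$ cut-off preserves the property of admitting a boundary value in the sense of currents, so smooth partitions of unity act on each $\mathcal{F}^{0,q}$. Exactness of
\[
0 \to \check{O}_\Omega \to \mathcal{F}^{0,0} \xrightarrow{\bar{\partial}} \mathcal{F}^{0,1} \xrightarrow{\bar{\partial}} \mathcal{F}^{0,2} \to \cdots
\]
at interior points is the classical Dolbeault--Grothendieck lemma, and at boundary points it is exactly Th\'eor\`eme $4.1$ de $[6]$ applied in a local coordinate chart in which $\Omega$ becomes a completely strictement $(n-1)$-convexe open subset of $\mathbb{C}^n$.

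Once the fine resolution is in place, the cohomology of the complex of global sections of $\mathcal{F}^{0,\bullet}$ coincides with the \v{C}ech cohomology $H^\bullet(\bar{\Omega},\check{O}_\Omega)$. Solving $\bar{\partial} g = f$ with $g$ admitting a boundary value in the sense of currents thus reduces to showing that $H^r(\bar{\Omega},\check{O}_\Omega)=0$ for $1 \leq r \leq n-1$. I would construct a Leray cover of $\bar{\Omega}$ consisting of small coordinate neighborhoods on which, again by Th\'eor\`eme $4.1$ de $[6]$, the higher cohomology of $\check{O}_\Omega$ vanishes, and then exploit the $(n-1)$-convexity of $\Omega$ through an Andreotti--Grauert type argument on this cover.

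The main obstacle I anticipate is precisely this last step: the sheaf $\check{O}_\Omega$ lives on the closed set $\bar{\Omega}$ rather than on the open set $\Omega$, so the classical Andreotti--Grauert vanishing theorems cannot be invoked verbatim. One has to adapt them carefully so as to incorporate both the presence of the boundary and the distributional nature of the boundary values, after which the existence of a global $(0,r-1)$-form $g$ with $\bar{\partial} g = f$ and a boundary value in the sense of currents is obtained directly from the identification of cohomologies.
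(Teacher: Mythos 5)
Your overall architecture coincides with the paper's: you build the complex $(\mathcal{F}^{0,\bullet}(\Omega),\bar{\partial})$, argue that it is a fine (hence acyclic) resolution of $\check{O}_\Omega$ using the local solvability of Th\'eor\`eme $4.1$ de $[6]$ at boundary points and the Dolbeault--Grothendieck lemma at interior points, and deduce the abstract de Rham isomorphism $H^r(\Omega,\check{O}_\Omega)\simeq \tilde{H}^{0,r}(\Omega)$. Up to that point you are reproducing the paper's argument faithfully.

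The genuine gap is the final step, which you yourself flag as ``the main obstacle'' and then do not resolve: the vanishing of $H^r(\Omega,\check{O}_\Omega)$ for $1\leq r\leq n-1$. Proposing to ``construct a Leray cover'' and ``adapt an Andreotti--Grauert type argument'' to a sheaf supported on the closed set $\bar{\Omega}$ is a program, not a proof; as you correctly observe, the classical vanishing theorems do not apply verbatim there, and nothing in your text shows how to carry the adaptation out. This is precisely where the corollary has real content, and the paper does not attempt such an adaptation at all: it invokes Corollaire $5.2$ de $[7]$ (S.~Sambou, \emph{R\'esolution du $\bar\partial$ pour les courants prolongeables}, Math.\ Nachrichten $235$, $2002$), which already establishes $H^r(\Omega,\check{O}_\Omega)=H^{r}(\Omega,O_\Omega)=0$ for such completely strictly $(n-1)$-convex domains, via the theory of extendable currents. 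Without either that citation or a worked-out substitute for it, your argument establishes the isomorphism $H^r(\Omega,\check{O}_\Omega)\simeq\tilde{H}^{0,r}(\Omega)$ but not the vanishing of either side, so the conclusion $\bar{\partial}g=f$ is not reached.
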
 
\begin{proof} 
Consid{\'e}rons la suite suivante:\\
\begin{equation} 
0 \rightarrow \check{O}_\Omega \rightarrow \mathcal{F}^{0,0}(\Omega) \rightarrow \mathcal{F}^{0,1}(\Omega) \rightarrow  \cdots \rightarrow \mathcal{F}^{0,n-1}(\Omega) \rightarrow \mathcal{F}^{0,n}(\Omega) \rightarrow 0.
\end{equation}
On a donc un complexe $(\mathcal{F}^{0,\bullet}(\Omega), \bar{\partial})$ du $\bar{\partial}$ pour les formes diff{\'e}rentielles d{\'e}finies sur $\Omega$ ayant une valeur au bord au sens des courants. Gr{\^a}ce {\`a} la r{\'e}solution locale obtenue dans $[6]$, ce complexe est une r{\'e}solution acyclique du faisceau $\check{O}_\Omega$. Ceci entraine l'isomorphisme $H^r(\Omega,\check{O}_\Omega)\simeq \tilde{H}^{0,r}(\Omega)$. D'apr{\`e}s le corollaire $5.2$ de $[7]$ on a $H^r(\Omega,\check{O}_\Omega)= H^{r}(\Omega, O_\Omega)=0$.\\
Donc
\begin{center}
$\tilde{H}^{0,r}(\Omega) = 0$.
\end{center}
\end{proof}
De fa\c{c}on plus g{\'e}n{\'e}ral si $0 < q < n-1$, on a le th{\'e}or{\`e}me suivant:
\begin{theorem} \label{q}
Soient $X$ une vari{\'e}t{\'e} analytique complexe de dimension $n$ et $\Omega \subset \subset X$ une domaine {\`a} bord lisse de classe $C^\infty$ et compl{\'e}tement strictement $q$-convexe avec $0 \leq q \leq n-1$. Soit $f$ une $(n,r)$-forme diff{\'e}rentielle de classe $C^\infty$, d{\'e}finie sur $\Omega$ et $\bar{\partial}$-ferm{\'e}e admettant une valeur au bord au sens des courants avec $1 \leq n-q \leq r \leq n$. Il existe une $(n , r-1)$-forme diff{\'e}rentielle $g$ de classe $C^\infty$, d{\'e}finie sur $\Omega$ ayant une valeur au bord au sens des courants telle que $\bar{\partial}g = f$.
\end{theorem}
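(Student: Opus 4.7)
Mon plan est de transposer au cadre des $(n,r)$-formes la d{\'e}marche employ{\'e}e pour le Corollaire \ref{4}. Notons $\check{\Omega}^n_\Omega$ le faisceau sur $\bar{\Omega}$ des germes de $n$-formes holomorphes sur $\Omega$ ayant une valeur au bord au sens des courants, et $\mathcal{F}^{n,r}(\Omega)$ celui des $(n,r)$-formes diff{\'e}rentielles de classe $C^\infty$ sur $\Omega$ ayant une valeur au bord au sens des courants. Dans une carte holomorphe locale, toute $(n,r)$-forme s'{\'e}crit $dz_1 \wedge \cdots \wedge dz_n \wedge \omega$ o{\`u} $\omega$ est une $(0,r)$-forme; puisque $\bar{\partial}$ commute avec la multiplication par des formes holomorphes, la r{\'e}solution locale du $\bar{\partial}$ pour les $(0,r)$-formes avec valeur au bord au sens des courants {\'e}tablie dans $[6]$ fournit directement l'exactitude locale dans le cadre $(n,r)$. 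Par cons{\'e}quent, le complexe
\[ 0 \rightarrow \check{\Omega}^n_\Omega \rightarrow \mathcal{F}^{n,0}(\Omega) \rightarrow \mathcal{F}^{n,1}(\Omega) \rightarrow \cdots \rightarrow \mathcal{F}^{n,n}(\Omega) \rightarrow 0 \]
constitue une r{\'e}solution du faisceau $\check{\Omega}^n_\Omega$.

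Les faisceaux $\mathcal{F}^{n,r}(\Omega)$ admettant des partitions de l'unit{\'e} de classe $C^\infty$, cette r{\'e}solution est acyclique, et l'argument classique de comparaison entre cohomologie de faisceau et cohomologie de De Rham donne l'isomorphisme de Dolbeault
\[ H^r(\Omega,\check{\Omega}^n_\Omega)\simeq \tilde{H}^{n,r}(\Omega). \]
Je me serai ainsi ramen{\'e} {\`a} {\'e}tablir l'annulation de $H^r(\Omega,\check{\Omega}^n_\Omega)$ pour $n-q \leq r \leq n$.

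Pour cette seconde {\'e}tape, j'invoquerai l'analogue pour $\check{\Omega}^n$ du corollaire $5.2$ de $[7]$, qui identifie $H^r(\Omega,\check{\Omega}^n_\Omega)$ {\`a} la cohomologie de $\check{C}$ech $H^r(\Omega,\Omega^n_\Omega)$ {\`a} valeur dans le faisceau coh{\'e}rent des $n$-formes holomorphes; puis le th{\'e}or{\`e}me d'Andreotti-Grauert appliqu{\'e} au domaine compl{\`e}tement strictement $q$-convexe $\Omega$ donnera $H^r(\Omega,\Omega^n_\Omega)=0$ pour $r \geq n-q$, et donc la conclusion $\tilde{H}^{n,r}(\Omega)=0$.

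L'obstacle principal sera de justifier proprement l'extension du corollaire $5.2$ de $[7]$ au faisceau $\check{\Omega}^n_\Omega$. Comme $\Omega^n_\Omega$ est localement libre de rang un sur $O_\Omega$, l'argument local comparant les germes ordinaires aux germes avec valeur au bord au sens des courants devrait s'adapter en tensorisant par la section de r{\'e}f{\'e}rence $dz_1 \wedge \cdots \wedge dz_n$ dans des cartes holomorphes; c'est cette v{\'e}rification technique que je consid{\`e}re comme le point d{\'e}licat du travail restant.
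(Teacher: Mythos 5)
Votre d{\'e}marche diff{\`e}re r{\'e}ellement de celle du texte. Vous transposez au bidegr{\'e} $(n,r)$ l'argument faisceautique du corollaire \ref{4} (r{\'e}solution fine de $\check{\Omega}^n_\Omega$, isomorphisme de Dolbeault, puis annulation de la cohomologie de $\check{C}$ech par Andreotti--Grauert), alors que la preuve du texte exploite le fait que, sur les $(n,\cdot)$-formes, l'op{\'e}rateur $\bar{\partial}$ est {\'e}gal {\`a} la diff{\'e}rentielle ext{\'e}rieure $d$ : elle invoque l'annulation $\check{H}^{n,r}(\Omega)=0$ pour $n-q\leq r\leq n$ {\'e}tablie dans $[8]$ pour les courants prolongeables, obtient ainsi une solution de $du=f$ qui est un courant prolongeable, puis la r{\'e}gularise au moyen de la formule d'homotopie $S=RS+AdS+dAS$ de ($[9]$ page $40$) et montre, par partition de l'unit{\'e} et noyau de Newton, que la solution r{\'e}gularis{\'e}e $(RS+AF)_{\vert \Omega}$ est de classe $C^\infty$ et admet une valeur au bord au sens des courants. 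Ce d{\'e}tour par la cohomologie de De Rham des courants prolongeables {\'e}vite pr{\'e}cis{\'e}ment le point que vous laissez en suspens.

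Le trou de votre proposition est concentr{\'e} dans l'{\'e}tape m{\'e}diane, {\`a} savoir l'identification $H^r(\bar{\Omega},\check{\Omega}^n_\Omega)\simeq H^r(\Omega,\Omega^n_\Omega)$. Ce n'est pas une simple tensorisation par $dz_1\wedge\cdots\wedge dz_n$ : il s'agit de comparer la cohomologie d'un faisceau d{\'e}fini sur le compact $\bar{\Omega}$, dont les fibres aux points du bord sont des germes de formes holomorphes ayant une valeur au bord, avec celle d'un faisceau coh{\'e}rent sur l'ouvert $\Omega$; c'est l{\`a} tout le contenu analytique du corollaire $5.2$ de $[7]$, que le texte n'invoque d'ailleurs que dans le cas $(n-1)$-convexe (corollaire \ref{4}). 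Rien dans votre argument ne garantit que cet {\'e}nonc{\'e}, ni son analogue pour le faisceau $\check{\Omega}^n_\Omega$, soit disponible pour un domaine compl{\'e}tement strictement $q$-convexe g{\'e}n{\'e}ral avec $0\leq q\leq n-1$; tant que cette r{\'e}f{\'e}rence n'est pas v{\'e}rifi{\'e}e ou que l'isomorphisme n'est pas d{\'e}montr{\'e}, la conclusion $\tilde{H}^{n,r}(\Omega)=0$ n'est pas acquise. Les deux {\'e}tapes extr{\^e}mes de votre plan --- l'exactitude locale du complexe $(n,\bullet)$ obtenue par multiplication par la forme holomorphe de r{\'e}f{\'e}rence, et l'annulation de $H^r(\Omega,\Omega^n_\Omega)$ pour $r\geq n-q$ par Andreotti--Grauert --- sont en revanche coh{\'e}rentes avec les r{\'e}f{\'e}rences cit{\'e}es.
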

\begin{proof} 
Puisqu'on consid{\`e}re les $(n,r)$-formes diff{\'e}rentielles alors\\ l'op{\'e}rateur $\bar{\partial}$ est {\'e}gal {\`a} l'op{\'e}rateur de diff{\'e}rentiation ext{\'e}rieur $d$.\\
Soit $f \in \tilde{H}^{n,r}(\Omega)$, $[f]$ est un courant prolongeable d{\'e}fini sur $\Omega$. Puisque D'apr{\`e}s $[8]$ on a $\check{H}^{n,r}(\Omega) = 0$ pour $n-q \leq r \leq n$ alors il existe un $(n,r-1)$-courant prolongeable $u$ d{\'e}fini sur $\Omega$ et prolongeable tel que $du = f$. Soit $S$ une extension {\`a} support compact dans $\bar{\Omega}$ de $u$. $S$ est d'ordre fini, $F = dS$  est un courant prolongeable d'ordre fini not{\'e} $m$ et $F_{\vert \Omega} = f$. D'apr{\`e}s ($[9]$ page $40$), on a $S = RS + AdS + dAS$. Or $dS= F$\\ 
$\Rightarrow$ $S= RS+ AF+ dAS$ alors $dS= d(RS+ AF)= F$ donc $(RS + AF)_{\vert \Omega}$ est une autre solution de l'{\'e}quation $du = f$. Or $RS$ est une forme diff{\'e}rentielle de classe $C^\infty$ {\`a} support compact donc admet une valeur au bord au sens des courants. Puisque $A$ n'augmente pas le support singulier, on a si $F$ est de classe $C^\infty$ sur $\Omega$ alors $AF$ est aussi de classe $C^\infty$ sur $\Omega$. Donc la solution $RS + AF$ est de classe $C^\infty$ sur $\Omega$. Il reste {\`a} montrer que $AF$ admet une valeur au bord au sens des courants sur $\Omega$.\\
Soit $(\varphi_j)_{j \in J}$ une partition de l'unit{\'e} subordonn{\'e}e {\`a} un recouvrement fini $(U_j)_{j \in J}$ de $\bar{\Omega}$ par les ouverts de coordonn{\'e}es locales.\\
On a $AF = \displaystyle {\sum_{j \in J} A \varphi_j F}$ avec $ A \varphi_j F$ \`a support compact sur $U_j$.\\
\begin{center}
$AF_{\vert \Omega} = \displaystyle {\sum_{j \in J} (A \varphi_j F)_{\vert \Omega}}$.
\end{center}
Si $U_j \subset \Omega$, alors $ A \varphi_j F$ est de classe $C^\infty$ {\`a} support compact dans $\Omega$, donc $(A \varphi_j F)_{\vert \Omega}$ admet une valeur au bord au sens des courants.\\
Si $U_j \nsubseteq \Omega$ et $U_j \cap b \Omega \neq \emptyset$; montrons alors que $ A \varphi_j F$ admet une valeur au bord au sens des courants.\\
Puisque $\varphi_j$ est {\`a} support dans $U_j$ ouvert de coordonn{\'e}es, donc nous sommes ramen{\'e}s {\`a} un domaine born{\'e} de $\mathbb{C}^n$. $ A \varphi_j F$ est de m{\^e}me nature que l'action du noyau de newton $E(x,y)$ sur $\varphi_j F$. Puisque $F$ est d'ordre $m$ {\`a} support compact sur $\bar{\Omega}$ qui prolonge $f$. Donc d'apr{\`e}s le th{\'e}or{\`e}me \ref{1}, $ A \varphi_j F$ admet une valeur au bord au sens des courants.\\
Puisque $Card(J)< \infty$, donc $\displaystyle {\sum_{j \in J} (A \varphi_j F)_{\vert \Omega}}$ admet une valeur au bord au sens des courants.
\begin{center}
$\tilde{H}^{n,r}(\Omega) = 0$.
\end{center}
\end{proof}
Tenant compte du th{\'e}or{\`e}me \ref{1} et des r{\'e}sultats de r{\'e}solution du $\bar{\partial}$ pour les formes diff{\'e}rentielles ayant une valeur au bord au sens des courants obtenus dans le corollaire \ref{4}, on peut faire la d{\'e}monstration du th{\'e}or{\`e}me \ref{zhou}:
\begin{proof} [D{\'e}monstration th{\'e}or{\`e}me \ref{zhou}] \item
Soit $f$ une $(p,q)$-forme diff{\'e}rentielle de classe $C^\infty$, $d$-ferm{\'e}e ayant une valeur au bord au sens des courants d{\'e}finie sur $\Omega$. Alors d'apr{\`e}s le th{\'e}or{\`e}me \ref{1}, il existe une $(p+q-1)$-forme diff{\'e}rentielle $g$ de classe $C^\infty$ ayant une valeur au bord au sens des courants telle que $dg = f$.\\
On ne perd pas en g{\'e}n{\'e}ralit{\'e} en consid{\'e}rant que $g$ se d{\'e}compose en une $(p-1,q)$-forme diff{\'e}rentielle $g_1$ de classe $C^\infty$ ayant une valeur au bord au sens des courants et en une $(p,q-1)$-forme diff{\'e}rentielle $g_2$ de classe $C^\infty$ ayant une valeur au bord au sens des courants.\\
On a $dg = d(g_1 + g_2) = dg_1 + dg_2 = f$.\\
Comme $d = \partial + \bar{\partial}$, on a pour des raisons de bidegr{\'e} $\partial g_2 = 0$ et $\bar{\partial} g_1 =0$. D'apr{\`e}s le corollaire \ref{4}, on a $g_1 = \bar{\partial}h_1$ et $g_2 = \partial h_2$ avec $h_1$ et $h_2$ des formes diff{\'e}rentielles de classe $C^\infty$ ayant une valeur au bord au sens des courants d{\'e}finies sur $\Omega$.\\
On a $ f = \partial g_1 + \bar{\partial}g_2 = \partial \bar{\partial}h_1 + \bar{\partial} \partial h_2$\\
Or $\partial \bar{\partial} = - \bar{\partial}\partial$
\begin{center}
 $\partial \bar{\partial} h_1 - \partial \bar{\partial} h_2 = \partial \bar{\partial} (h_1-h_2)= f$.
 \end{center}
Posons $u = h_1 -h_2$, $u$ est une $(p-1,q-1)$-forme diff{\'e}rentielle de classe $C^\infty$ ayant une valeur au bord au sens des courants  d{\'e}finie sur $\Omega$ telle que $\partial \bar{\partial} u = f$.
\end{proof}
Mieux tenant compte du th{\'e}or{\`e}me \ref{q}, on obtient le r{\'e}sultat suivant:
\begin{theorem} \label{sam}
Soient $M$ une vari{\'e}t{\'e} analytique complexe de dimension $n$ et $\Omega \subset \subset M$ un domaine contractile {\`a} bord lisse de classe $C^\infty$. Supposons que $\Omega$ est compl{\'e}tement strictement $q$-convexe avec $0 \leq q \leq n-1$ et $H^j(b\Omega)$ trivial pour $1 \leq j \leq 2n-2$. Alors l'{\'e}quation $\partial \bar{\partial} u = f$, o{\`u} $f$ est une $(n,r)$-forme diff{\'e}rentielle de classe $C^\infty$, $d$-ferm{\'e}e ayant une valeur au bord au sens des courants avec $n-q+1 \leq r \leq n-1$ admet une solution $u$ qui est une $(n-1,r-1)$-forme diff{\'e}rentielle de classe $C^\infty$ avec une valeur au bord au sens des courants.
\end{theorem}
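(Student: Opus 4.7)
The plan is to follow the same scheme as the proof of Théorème \ref{zhou}, but with Théorème \ref{q} (and its conjugate counterpart) replacing Corollaire \ref{4}. The four moves are: use Théorème \ref{1} to obtain a $d$-primitive $g$ of $f$; reduce $g$ to a sum of two bidegree components; solve a $\bar{\partial}$-équation and a $\partial$-équation using Théorème \ref{q}; and combine to exhibit $u$.

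First, since $f$ is a $d$-fermée form of class $C^\infty$ of total degree $n+r$ with valeur au bord au sens des courants, Théorème \ref{1} applied in real dimension $2n$ yields an $(n+r-1)$-forme $g$ of class $C^\infty$ with valeur au bord telle que $dg=f$; the hypothesis $H^j(b\Omega)=0$ for $1\leq j\leq 2n-2$ covers the degree $n+r-1\in[n,2n-2]$. Next, as in the proof of Théorème \ref{zhou}, I decompose $g$ by bidegré and reduce to $g=g_1+g_2$ with $g_1$ of bidegré $(n-1,r)$ and $g_2$ of bidegré $(n,r-1)$, both with valeur au bord. The bidegré $(n+1,r-1)$ being vide forces $\partial g_2=0$, and bidegré considerations give $\bar{\partial} g_1=0$, whence $\partial g_1+\bar{\partial} g_2=f$.

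The goal of the third move is to find $h_1$ and $h_2$ of bidegré $(n-1,r-1)$ with valeur au bord telles que $\bar{\partial} h_1=g_1$ et $\partial h_2=g_2$; once both are obtained, $u:=h_1-h_2$ vérifie $\partial\bar{\partial} u=\partial g_1+\bar{\partial}\partial h_2=\partial g_1+\bar{\partial} g_2=f$ via $\partial\bar{\partial}=-\bar{\partial}\partial$, which is the desired conclusion. The hypothesis $r\geq n-q+1$ places us strictly inside the admissible range $r\geq n-q$ of Théorème \ref{q}, which supplies the $\bar{\partial}$-résolution in the $(n,\cdot)$-bidegré; the $\partial$-équation $\partial h_2=g_2$ is then obtained by conjugaison complexe, which preserves the $q$-convexité hypothesis since this condition is formulated through the eigenvalues of the forme de Lévi.

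The main obstacle I anticipate is the transfer from bidegré $(n,r)$, where Théorème \ref{q} is directly applicable, to the bidegrés $(n-1,r)$ and $(n,r-1)$ needed for $g_1$ et $g_2$. Concretely, one must either extend Théorème \ref{q} to these bidegrés by a componentwise argument in a base holomorphe locale combined with a partition-of-unity patching analogous to the one used in the démonstration of Théorème \ref{1}, or else appeal to the conjugaison complexe; in either case, the boundary-value-in-currents property and the $C^\infty$ régularité must be tracked throughout, and this is the delicate technical point.
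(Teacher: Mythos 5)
Your proposal follows exactly the paper's own proof of Théorème \ref{sam}: Théorème \ref{1} produces the $d$-primitive $g$, the bidegree decomposition $g=g_1+g_2$ forces $\bar{\partial}g_1=0$ and $\partial g_2=0$, Théorème \ref{q} (with conjugation for the $\partial$-equation) yields $h_1,h_2$, and $u=h_1-h_2$ concludes. The bidegree mismatch you flag as the delicate point --- Théorème \ref{q} is stated only for $(n,\cdot)$-forms while $g_1$ has bidegree $(n-1,r)$ --- is a genuine issue, but the paper applies Théorème \ref{q} to $g_1$ and $g_2$ without further comment, so your version is, if anything, more careful than the original on this step.
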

\begin{proof} 
Soit $f$ une $(n,r)$-forme diff{\'e}rentielle de classe $C^\infty$, $d$-ferm{\'e}e ayant une valeur au bord au sens des courants d{\'e}finie sur $\Omega$. Alors d'apr{\`e}s le th{\'e}or{\`e}me \ref{1}, il existe une $(n+r-1)$-forme diff{\'e}rentielle $g$ de classe $C^\infty$ ayant une valeur au bord au sens des courants telle que $dg = f$.\\
$g$ se d{\'e}compose en une $(n-1,r)$-forme diff{\'e}rentielle $g_1$ de classe $C^\infty$ ayant une valeur au bord au sens des courants et en une $(n,r-1)$-forme diff{\'e}rentielle $g_2$ de classe $C^\infty$ ayant une valeur au bord au sens des courants.\\
On a $dg = d(g_1 + g_2) = dg_1 + dg_2 = f$.\\
Comme $d = \partial + \bar{\partial}$, on a pour des raisons de bidegr{\'e} $\partial g_2 = 0$ et $\bar{\partial} g_1 =0$. D'apr{\`e}s le th{\'e}or{\`e}me \ref{q}, on a $g_1 = \bar{\partial}h_1$ et $g_2 = \partial h_2$ avec $h_1$ et $h_2$ des formes diff{\'e}rentielles de classe $C^\infty$ ayant une valeur au bord au sens des courants d{\'e}finies sur $\Omega$.\\
On a $ f = \partial g_1 + \bar{\partial}g_2 = \partial \bar{\partial}h_1 + \bar{\partial} \partial h_2$\\
Or $\partial \bar{\partial} = - \bar{\partial}\partial$
\begin{center}
 $\partial \bar{\partial} h_1 - \partial \bar{\partial} h_2 = \partial \bar{\partial} (h_1-h_2)= f$.
 \end{center}
Posons $u = h_1 -h_2$, $u$ est une $(n-1,r-1)$-forme diff{\'e}rentielle de classe $C^\infty$ ayant une valeur au bord au sens des courants  d{\'e}finie sur $\Omega$ telle que $\partial \bar{\partial} u = f$.
\end{proof}
\section{R{\'e}solution du $\partial \bar{\partial}$ pour les formes diff{\'e}rentielles ayant une valeur au bord au sens des courants dans $M \setminus \bar{\Omega}$, o{\`u} $\Omega$ est un domaine contractile compl{\'e}tement strictement pseudoconvexe}
Soit $\Omega$ un domaine d'une vari{\'e}t{\'e} diff{\'e}rentiable $M$ de dimension $n$. Dans cette partie il est question de donner l'analogue du th{\'e}or{\`e}me \ref{zhou} pour $\Omega = M \setminus \bar{D}$, o{\`u} $D \subset \subset M$ est un domaine contractile compl{\'e}tement strictement pseudoconvexe v{\'e}rifiant $H^j(bD)$  trivial  pour $1 \leq j \leq n-2$. Pour cela commen\c{c}ons par donner les r{\'e}sultats suivant:
\begin{corollary} \label{2}
Soit $X$ une vari{\'e}t{\'e} de Stein de dimension $n$. Soit $\Omega \subset X$ tel que $X$ soit une extension $(n-1)$-concave de $\Omega$. Soit $f$ une forme diff{\'e}rentielle de bidegr{\'e} $(0,r)$ sur $\Omega$, $\bar{\partial}$-ferm{\'e}e admettant une valeur au bord au sens des courants, avec $1 \leq r \leq n-2$. Il existe une $(0, r-1)$-forme diff{\'e}rentielle $g$ d{\'e}finie sur $\Omega$ ayant une valeur au bord au sens des courants telle que $\bar{\partial} g = f$.
\end{corollary}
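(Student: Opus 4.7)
The argument will parallel the proof of Corollary \ref{4}, with the concave local solvability and the concave vanishing theorem for $\check{O}_\Omega$ taking the place of the pseudoconvex inputs. The plan is to exhibit the complex $(\mathcal{F}^{0,\bullet}(\Omega), \bar{\partial})$ as a fine, hence acyclic, resolution of the sheaf $\check{O}_\Omega$ on $\bar{\Omega}$, then invoke the abstract de Rham theorem to translate the question into one about \v{C}ech cohomology, and finally apply an Andreotti--Grauert type vanishing adapted to the $(n-1)$-concave setting.

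First I would write the sequence
\begin{equation*}
0 \rightarrow \check{O}_\Omega \rightarrow \mathcal{F}^{0,0}(\Omega) \rightarrow \mathcal{F}^{0,1}(\Omega) \rightarrow \cdots \rightarrow \mathcal{F}^{0,n}(\Omega) \rightarrow 0
\end{equation*}
and argue its exactness at the sheaf level. The only nontrivial point is exactness at boundary germs, which amounts to local $\bar{\partial}$-solvability with distributional boundary value near a point of $b\Omega$; this is precisely the concave local result from [6] that supplies the counterpart of what was used in Corollary \ref{4}. The sheaves $\mathcal{F}^{0,r}(\Omega)$ being modules over $C^\infty_{\bar{\Omega}}$ and $\bar{\Omega}$ being paracompact, they are fine and the resolution is acyclic, so the abstract de Rham theorem yields the isomorphism $\tilde{H}^{0,r}(\Omega) \simeq H^r(\bar{\Omega}, \check{O}_\Omega)$.

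It then remains to show that $H^r(\Omega, \check{O}_\Omega) = 0$ for $1 \leq r \leq n-2$. As in Corollary \ref{4}, I would reduce this to the vanishing of $H^r(\Omega, O_\Omega)$ in the same range; the latter is the classical Andreotti--Grauert statement applied to the $(n-1)$-concave extension of $\Omega$ in the Stein manifold $X$, whose cohomological range is exactly $1 \leq r \leq n-2$. Combining this with the previous isomorphism gives $\tilde{H}^{0,r}(\Omega) = 0$ in the advertised range, which is the conclusion.

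The main obstacle will be the identification $H^r(\Omega, \check{O}_\Omega) = H^r(\Omega, O_\Omega)$ in the concave case. In the pseudoconvex case this step was delegated to [7, Corollaire 5.2]; here one needs a strictly concave analog, presumably of the sort developed in [8] which the paper already invokes for extensible currents in the concave setting. Concretely, this amounts to verifying that a holomorphic function on $\Omega$ which is locally a \v{C}ech coboundary for $O_\Omega$ can be written as a coboundary of holomorphic functions admitting distributional boundary values, a point which is genuinely sensitive to the concavity of $b\Omega$ and is the analytic heart of the proof.
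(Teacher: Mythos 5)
Your overall architecture coincides with the paper's: fine resolution of $\check{O}_\Omega$ by the sheaves $\mathcal{F}^{0,\bullet}(\Omega)$ via the local solvability of $[6]$, the abstract de Rham isomorphism $\tilde{H}^{0,r}(\Omega)\simeq H^r(\Omega,\check{O}_\Omega)$, reduction to $H^r(\Omega,O_\Omega)$, and finally the Andreotti--Grauert invariance $H^{0,r}(\Omega)=H^{0,r}(X)$ combined with the Stein hypothesis on $X$ to get vanishing. Up to that last comparison your proposal is essentially the paper's proof.

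The genuine gap is the step you yourself flag as ``the main obstacle'': the identification $H^r(\Omega,\check{O}_\Omega)\simeq H^r(\Omega,O_\Omega)$. You leave it unresolved and propose to look for a concave analogue of $[7$, Corollaire $5.2]$, i.e.\ a cohomological comparison theorem asserting that \v{C}ech coboundaries for $O_\Omega$ can be rechosen among holomorphic functions with distributional boundary values. This is both unproved in your write-up and an overcomplication: the paper disposes of the point in one line by observing that, $\Omega$ being concave, the two sheaves already coincide, $\check{O}_\Omega=O_\Omega$. The reason is the Hartogs phenomenon across an $(n-1)$-concave boundary: near every point of $b\Omega$ a holomorphic function on $\Omega$ extends holomorphically past the boundary, hence is smooth up to $b\Omega$ and a fortiori admits a distributional boundary value, so every germ in $O_\Omega$ already lies in $\check{O}_\Omega$. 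Once the sheaves are equal, the equality of their \v{C}ech cohomologies is automatic and no separate comparison theorem is needed. Your proof is not wrong in direction, but as written it terminates at exactly the point where the concavity hypothesis has to be used, and the analytic content you assign to that point (a coboundary-level statement) is not the one the argument actually requires.
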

\begin{proof} 
Il s'agit de montrer que $\tilde{H}^{0,r}(\Omega) = 0$ pour $1 \leq r \leq n-2$.\\
Puisque $\Omega$ est concave, on a $\check{O}_\Omega = O_\Omega$. Donc $H^r(\Omega,\check{O}_\Omega)\simeq H^r(\Omega,O_\Omega)$.\\
Consid{\'e}rons la suite suivante:\\
\begin{equation} 
0 \rightarrow \check{O}_\Omega \rightarrow \mathcal{F}^{0,0}(\Omega) \rightarrow \mathcal{F}^{0,1}(\Omega) \rightarrow  \cdots \rightarrow \mathcal{F}^{0,n-1}(\Omega) \rightarrow \bar{\partial}\mathcal{F}^{0,n-1}(\Omega) \rightarrow 0.
\end{equation}
D'apr{\`e}s le th{\'e}or{\`e}me $4.1$ de $[6]$, on sait r{\'e}soudre localement le $\bar{\partial}$ pour les formes diff{\'e}rentielles ayant une valeur au bord au sens des courants sur $\Omega$. Donc la suite est exacte, et puisque le faisceau $\tilde{\mathcal{F}}^{0,r}(\Omega)$ est fin comme faisceau de module sur un faisceau d'anneau de classe $C^\infty$, alors le complexe diff{\'e}rentielle $(\mathcal{F}^{0,\bullet}(\Omega), \bar{\partial})$ des formes diff{\'e}rentielles d{\'e}finies sur $\Omega$ ayant une valeur au bord au sens des courants est une r{\'e}solution acyclique du faisceau $\check{O}_\Omega$. Par cons{\'e}quent pour $0 \leq r \leq n-2$, on a l'isomorphisme fonctoriel suivant:\\

 $H^r(\Omega,\check{O}_\Omega)\simeq \tilde{H}^{0,r}(\Omega):= \frac{ker(\bar{\partial}:\mathbf{E}^{0,r}(\Omega) \rightarrow \mathbf{E}^{0,r+1}(\Omega))}{Im(\bar{\partial}:\mathbf{E}^{0,r-1}(\Omega) \rightarrow \mathbf{E}^{0,r}(\Omega))}$\\
 
 o{\`u} $\mathbf{E}^{0,r}(\Omega):= \Gamma(\Omega,\mathcal{F}^{0,r}(\Omega))$ sont des sections sur $\Omega$ des faisceaux $\mathcal{F}^{0,r}$.\\
 Donc $H^r(\Omega,O_\Omega)\simeq \tilde{H}^{0,r}(\Omega)$. D'apr{\`e}s l'isomorphisme de Dolbeault, on a $H^r(\Omega,O_\Omega)= H^{0,r}(\Omega)$. Puisque $X$ est une extension $(n-1)$-concave de $\Omega$, on a par invariance de la cohomologie: $H^{0,r}(\Omega) = H^{0,r}(X) = 0$ pour $1 \leq r \leq n-1$.\\
 Donc\\
  \begin{center}
  $\tilde{H}^{0,r}(\Omega) = 0$, pour $1 \leq r \leq n-2$.
  \end{center}
\end{proof}
En s'inspirant de la d{\'e}monstration du corollaire \ref{2}, on obtient dans le corollaire suivant, l'analogue global d'un r{\'e}sultat obtenu dans $[6]$.
\begin{theorem} \label{djilo}
Soient $M$ une vari{\'e}t{\'e} analytique complexe de dimension $n$ et $D \subset \subset M$ un domaine contractile compl{\'e}tement strictement pseudoconvexe {\`a} bord lisse tel que $M$ soit une extension $n-1$-convexe de $D$. Posons $\Omega = M \setminus \bar{D}$. Soit $f$ une forme diff{\'e}rentielle de bidegr{\'e} $(0,r)$ d{\'e}finie sur $\Omega$ de classe $C^\infty$, $\bar{\partial}$-ferm{\'e}e avec une valeur au bord au sens des courants, $1 \leq r \leq n-2$. Il existe une $(0,r-1)$-forme diff{\'e}rentielle $g$ d{\'e}finie sur $\Omega$ de classe $C^\infty$, ayant une valeur au bord au sens des courants telle que $\bar{\partial}g = f$. 
\end{theorem}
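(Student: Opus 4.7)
The plan is to mirror closely the sheaf-theoretic argument used in Corollary \ref{2}, replacing the Stein hypothesis on the ambient manifold by the hypothesis that $M$ is an $(n-1)$-convex extension of $D$. First I would form, on $\bar{\Omega}$, the complex
\[ 0 \rightarrow \check{O}_\Omega \rightarrow \mathcal{F}^{0,0}(\Omega) \rightarrow \mathcal{F}^{0,1}(\Omega) \rightarrow \cdots \rightarrow \mathcal{F}^{0,n-1}(\Omega) \rightarrow \bar{\partial}\mathcal{F}^{0,n-1}(\Omega) \rightarrow 0. \]
Exactness at each stalk (including those on $b\Omega = bD$) follows from the local resolution of the $\bar{\partial}$-equation for forms with boundary value in the sense of currents established in [6], applied on small neighbourhoods of boundary points where $\Omega$ is a local piece of the complement of a strictly pseudoconvex hypersurface. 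Each $\mathcal{F}^{0,r}(\Omega)$ is a module over the (fine) sheaf of germs of $C^\infty$ functions on $\bar{\Omega}$, hence is itself fine. Thus the complex provides an acyclic resolution of $\check{O}_\Omega$, giving the functorial isomorphism
\[ \tilde{H}^{0,r}(\Omega) \simeq H^r(\Omega, \check{O}_\Omega). \]

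Next I would identify $\check{O}_\Omega$ with $O_\Omega$. Since $D$ is strictly pseudoconvex, the boundary $bD = b\Omega$ is pseudoconcave as seen from $\Omega = M\setminus \bar{D}$. A germ of holomorphic function on $\Omega$ admitting a distribution boundary value on $bD$ therefore extends holomorphically across $bD$ by a Hartogs--Bochner type argument, so at the level of sheaves on $\bar{\Omega}$ one has $\check{O}_\Omega = O_\Omega$. Combined with the Dolbeault isomorphism this yields
\[ \tilde{H}^{0,r}(\Omega) \simeq H^r(\Omega, \check{O}_\Omega) \simeq H^r(\Omega, O_\Omega) \simeq H^{0,r}(\Omega). \]

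Finally I would invoke the hypothesis that $M$ is an $(n-1)$-convex extension of $D$. In this situation Andreotti--Grauert type invariance of Dolbeault cohomology under convex extension gives $H^{0,r}(\Omega) \simeq H^{0,r}(M\setminus \bar{D}) = 0$ in the stable range, in particular for $1 \leq r \leq n-2$. Chaining the isomorphisms produces $\tilde{H}^{0,r}(\Omega) = 0$ for $1 \leq r \leq n-2$, which is exactly the statement to be proved: the given $\bar{\partial}$-closed $(0,r)$-form $f$ admits a primitive $g$ that is $C^\infty$ on $\Omega$ and has a distribution boundary value.

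The step I expect to be the main obstacle is the third one, namely the rigorous identification of $H^{0,r}(\Omega)$ with $0$ via the $(n-1)$-convex extension hypothesis on $M$; this requires pinning down the precise version of Andreotti--Grauert invariance appropriate to the concave side of a strictly pseudoconvex boundary, together with the contractibility assumption. The identification $\check{O}_\Omega = O_\Omega$, while conceptually clean via the Hartogs--Bochner mechanism, also needs care because it must be asserted at the sheaf level on $\bar{\Omega}$, which amounts to a local extension statement across each point of $bD$ under only distributional boundary control.
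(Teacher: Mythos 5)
Your proposal follows essentially the same route as the paper: the fine acyclic resolution of $\check{O}_\Omega$ by the sheaves $\mathcal{F}^{0,\bullet}(\Omega)$ giving $\tilde{H}^{0,r}(\Omega)\simeq H^r(\Omega,\check{O}_\Omega)$, the identification $\check{O}_\Omega=O_\Omega$ on the concave side, and the Dolbeault isomorphism followed by cohomological invariance under the extension hypothesis. The only point where the paper is more explicit than you are is the final vanishing, which it obtains in two steps --- $H^{0,r}(\Omega)=H^{0,r}(M)$ by concave-extension invariance, then $H^{0,r}(M)\simeq H^{0,r}(D)=0$ using the strict pseudoconvexity of $D$ --- precisely the step you flagged as the main obstacle.
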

\begin{proof}
Il s'agit de montrer que $\tilde{H}^{0,r}(\Omega) = 0$ pour $1 \leq r \leq n-2$.\\
Puisque $\Omega$ est concave, on a $\check{O}_\Omega = O_\Omega$. Donc $H^r(\Omega,\check{O}_\Omega)\simeq H^r(\Omega,O_\Omega)$.\\
Consid{\'e}rons la suite suivante:\\
\begin{equation} 
0 \rightarrow \check{O}_\Omega \rightarrow \mathcal{F}^{0,0}(\Omega) \rightarrow \mathcal{F}^{0,1}(\Omega) \rightarrow  \cdots \rightarrow \mathcal{F}^{0,n-1}(\Omega) \rightarrow \mathcal{F}^{0,n}(\Omega) \rightarrow 0.
\end{equation}
D'apr{\`e}s $[6]$, on sait r{\'e}soudre localement le $\bar{\partial}$ pour les formes diff{\'e}rentielles ayant une valeur au bord au sens des courants sur $\Omega$. Donc la suite est exacte, et puisque le faisceau $\tilde{\mathcal{F}}^{0,r}(\Omega)$ est fin comme faisceau de module sur un faisceau d'anneau de classe $C^\infty$, alors le complexe diff{\'e}rentielle $(\mathcal{F}^{0,\bullet}(\Omega), \bar{\partial})$ des formes diff{\'e}rentielles d{\'e}finies sur $\Omega$ ayant une valeur au bord au sens des courants est une r{\'e}solution acyclique du faisceau $\check{O}_\Omega$. Par cons{\'e}quent pour $0 \leq r \leq n-2$, on a l'isomorphisme fonctoriel suivant:\\

 $H^r(\Omega,\check{O}_\Omega)\simeq \tilde{H}^{0,r}(\Omega):= \frac{ker(\bar{\partial}:\mathbf{E}^{0,r}(\Omega) \rightarrow \mathbf{E}^{0,r+1}(\Omega))}{Im(\bar{\partial}:\mathbf{E}^{0,r-1}(\Omega) \rightarrow \mathbf{E}^{0,r}(\Omega))}$\\
 
 o{\`u} $\mathbf{E}^{0,r}(\Omega):= \Gamma(\Omega,\mathcal{F}^{0,r}(\Omega))$ sont des sections sur $\Omega$ des faisceaux $\mathcal{F}^{0,r}$.\\
 Donc $H^r(\Omega,O_\Omega)\simeq \tilde{H}^{0,r}(\Omega)$. D'apr{\`e}s l'isomorphisme de Dolbeault, on a $H^r(\Omega,O_\Omega)= H^{0,r}(\Omega)$. Puisque $M$ est une extension $n-1$-concave de $\Omega$, on a par invariance de la cohomologie: $H^{0,r}(\Omega) = H^{0,r}(M)$ pour $0 \leq r \leq n-2$.\\
 D'apr{\`e}s $[2]$ $H^{0,r}(D) \simeq H^{0,r}(M) = 0$ pour $1 \leq r \leq n$.\\
 Donc\\
  \begin{center}
  $\tilde{H}^{0,r}(\Omega) = 0$, pour $1 \leq r \leq n-2$.
  \end{center}
\end{proof}
De fa\c{c}on plus g{\'e}n{\'e}ral si $0 < q < n-1$, on a le th{\'e}or{\`e}me suivant:
\begin{theorem} \label{djilo}
Soient $M$ une vari{\'e}t{\'e} analytique complexe de dimension $n$ et $D \subset \subset M$ un domaine contractile compl{\'e}tement strictement pseudoconvexe {\`a} bord lisse tel que $M$ soit une extension $q$-convexe de $D$ avec $q \geq \frac{n+1}{2}$. Posons $\Omega = M \setminus \bar{D}$. Soit $f$ une forme diff{\'e}rentielle de bidegr{\'e} $(0,r)$ d{\'e}finie sur $\Omega$ de classe $C^\infty$, $\bar{\partial}$-ferm{\'e}e avec une valeur au bord au sens des courants, $1 \leq r \leq q-1$. Il existe une $(0,r-1)$-forme diff{\'e}rentielle $g$ d{\'e}finie sur $\Omega$ de classe $C^\infty$, ayant une valeur au bord au sens des courants telle que $\bar{\partial}g = f$. 
\end{theorem}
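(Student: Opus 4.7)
The plan is to mirror the proof of the preceding Theorem \ref{djilo} verbatim, replacing the $(n-1)$-convex extension hypothesis by the weaker $q$-convex extension with $q \geq \frac{n+1}{2}$, and tracking how this modifies the admissible range of degrees. The target is the vanishing $\tilde{H}^{0,r}(\Omega) = 0$ for $1 \leq r \leq q-1$, from which the existence of $g$ is immediate.

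First I would reassemble exactly the same sheaf-theoretic machinery. The complex
\[ 0 \to \check{O}_\Omega \to \mathcal{F}^{0,0}(\Omega) \to \mathcal{F}^{0,1}(\Omega) \to \cdots \to \mathcal{F}^{0,n}(\Omega) \to 0 \]
is exact by the local resolution of $\bar{\partial}$ for forms with boundary values in the sense of currents proved in $[6]$, and each $\mathcal{F}^{0,r}(\Omega)$ is a fine sheaf (being a module over the sheaf of $C^\infty$ functions). It is therefore an acyclic resolution of $\check{O}_\Omega$, which yields $\tilde{H}^{0,r}(\Omega) \simeq H^r(\Omega,\check{O}_\Omega)$. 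Since $\Omega = M \setminus \bar{D}$ is concave, $\check{O}_\Omega = O_\Omega$, and the Dolbeault isomorphism gives $H^r(\Omega,O_\Omega) \simeq H^{0,r}(\Omega)$.

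The decisive step is the cohomological identification provided by the $q$-convex extension hypothesis. Invoking the Andreotti--Grauert-type invariance result of $[2]$, the fact that $M$ is a $q$-convex extension of $D$ yields $H^{0,r}(\Omega) \simeq H^{0,r}(M)$ in the appropriate range. Combined with the vanishing $H^{0,r}(M) \simeq H^{0,r}(D) = 0$ coming from the contractibility and complete strict pseudoconvexity of $D$ (again via $[2]$), this gives $\tilde{H}^{0,r}(\Omega) = 0$ in every degree $r$ with $1 \leq r \leq q-1$, proving the theorem.

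The main obstacle will be justifying carefully the cohomological invariance $H^{0,r}(\Omega) \simeq H^{0,r}(M)$ for \emph{all} $r \leq q-1$, since this is the only point where the numerical constraint $q \geq \frac{n+1}{2}$ intervenes; this bound is precisely what is needed so that the $q$-convex extension statement from $[2]$ applies simultaneously in every degree up to $q-1$, including the borderline degrees not covered by the previous theorem. Once this invariance is secured, the remainder of the argument is a direct transposition of the sheaf-theoretic reasoning of the previous Theorem \ref{djilo}.
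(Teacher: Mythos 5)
Your proposal follows the same sheaf-theoretic route as the paper (fine resolution of $\check{O}_\Omega$ by the $\mathcal{F}^{0,\bullet}(\Omega)$, the identification $\check{O}_\Omega = O_\Omega$ on the concave domain, Dolbeault, then invariance of cohomology under the concave extension), but it contains a genuine gap in the final step. The invariance statement gives $H^{0,r}(\Omega) \simeq H^{0,r}(M)$ for $0 \leq r \leq q-1$, while the vanishing theorem from the Andreotti--Grauert theory ($[3]$) gives $H^{0,r}(M) \simeq H^{0,r}(D) = 0$ only for $n-q \leq r \leq n$, not for all $r \geq 1$. Intersecting the two ranges yields $\tilde{H}^{0,r}(\Omega) = 0$ only for $n-q \leq r \leq q-1$; the condition $q \geq \frac{n+1}{2}$ is exactly the condition $n-q \leq q-1$ making this intersection nonempty, not --- as you assert --- a condition ensuring the vanishing ``simultaneously in every degree up to $q-1$.'' Your claim that contractibility and complete strict pseudoconvexity of $D$ alone force $H^{0,r}(M) = 0$ for all $1 \leq r \leq q-1$ is not supported by the results you invoke.

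The degrees $1 \leq r < n-q$ are therefore not covered by your argument. The paper itself only reaches the full range $1 \leq r \leq q-1$ by adding the supplementary hypothesis that $M$ is a Stein manifold, so that $H^{0,r}(M) = 0$ in all positive degrees and the invariance isomorphism transfers this to $\Omega$. Without that (or some other source of vanishing for $H^{0,r}(M)$ in low degrees), your proof establishes the conclusion only for $n-q \leq r \leq q-1$.
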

\begin{proof}
Il s'agit de montrer que $\tilde{H}^{0,r}(\Omega) = 0$ pour $1 \leq r \leq q-1$\\
Puisque $\Omega$ est concave, on a $\check{O}_\Omega = O_\Omega$. Donc $H^r(\Omega,\check{O}_\Omega)\simeq H^r(\Omega,O_\Omega)$.\\
Consid{\'e}rons la suite suivante:\\
\begin{equation} 
0 \rightarrow \check{O}_\Omega \rightarrow \mathcal{F}^{0,0}(\Omega) \rightarrow \mathcal{F}^{0,1}(\Omega) \rightarrow  \cdots \rightarrow \mathcal{F}^{0,n-1}(\Omega) \rightarrow \mathcal{F}^{0,n}(\Omega) \rightarrow 0
\end{equation}
D'apr{\`e}s $[6]$, on sait r{\'e}soudre localement le $\bar{\partial}$ pour les formes diff{\'e}rentielles ayant une valeur au bord au sens des courants sur $\Omega$. Donc la suite est exacte, et puisque le faisceau $\tilde{\mathcal{F}}^{0,r}(\Omega)$ est fin comme faisceau de module sur un faisceau d'anneau de classe $C^\infty$, alors le complexe diff{\'e}rentielle $(\mathcal{F}^{0,\bullet}(\Omega), \bar{\partial})$ des formes diff{\'e}rentielles d{\'e}finies sur $\Omega$ ayant une valeur au bord au sens des courants est une r{\'e}solution acyclique du faisceau $\check{O}_\Omega$. Par cons{\'e}quent pour $0 \leq r \leq q-1$, on a l'isomorphisme fonctoriel suivant:\\

 $H^r(\Omega,\check{O}_\Omega)\simeq \tilde{H}^{0,r}(\Omega):= \frac{ker(\bar{\partial}:\mathbf{E}^{0,r}(\Omega) \rightarrow \mathbf{E}^{0,r+1}(\Omega))}{Im(\bar{\partial}:\mathbf{E}^{0,r-1}(\Omega) \rightarrow \mathbf{E}^{0,r}(\Omega))}$\\
 
 o{\`u} $\mathbf{E}^{0,r}(\Omega):= \Gamma(\Omega,\mathcal{F}^{0,r}(\Omega))$ sont des sections sur $\Omega$ des faisceaux $\mathcal{F}^{0,r}$.\\
 Donc $H^r(\Omega,O_\Omega)\simeq \tilde{H}^{0,r}(\Omega)$. D'apr{\`e}s l'isomorphisme de Dolbeault, on a $H^r(\Omega,O_\Omega)= H^{0,r}(\Omega)$. Puisque $M$ est une extension $q$-concave de $\Omega$, on a par invariance de la cohomologie: $H^{0,r}(\Omega) = H^{0,r}(M)$ pour $0 \leq r \leq q-1$.\\
 D'apr{\`e}s $[3]$ $H^{0,r}(D) \simeq H^{0,r}(M) = 0$ pour $n-q \leq r \leq n$.\\
 Donc\\
  \begin{center}
  $\tilde{H}^{0,r}(\Omega) = 0$, pour $n-q \leq r \leq q-1$.
  \end{center}
De plus si $M$ est une vari{\'e}t{\'e} de Stein, on a   $ H^{0,r}(M) = 0$ $\Rightarrow$  $H^{0,r}(\Omega) = 0$.\\
Donc\\
\begin{center}
  $\tilde{H}^{0,r}(\Omega) = 0$, pour $1\leq r \leq q-1$.
  \end{center}
\end{proof}
Pour faire la preuve du th{\'e}or{\`e}me \ref{aff}, nous avons besoin du lemme suivant:
\begin{lemma} \label{aff1}
Soient $M$ une vari{\'e}t{\'e} diff{\'e}rentiable de dimension $n$, $D \subset \subset M$ un domaine contractile {\`a} bord lisse de classe $C^\infty$ tel que $M$ soit une extension contractile de $D$ avec $H^j(bD)$  trivial  pour $1 \leq j \leq n-2$. Posons $\Omega = M \setminus \bar{D}$. Si $\stackrel{\circ}{\bar{\Omega}} = \Omega$ et $f$ est une $r$-forme diff{\'e}rentielle de classe $C^\infty$, $d$-ferm{\'e}e admettant une valeur au bord au sens des courants sur $\Omega$ avec $0 \leq r \leq n$, alors il existe une $(r-1)$-forme diff{\'e}rentielle $g$ de classe $C^\infty$ sur $\Omega$ ayant une valeur au bord au sens des courants telle que $dg = f$. 
\end{lemma}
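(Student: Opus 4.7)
The plan is to mirror the proof of Theorem~\ref{1}, adapted to the complement geometry $\Omega = M \setminus \bar D$. By~[5], the $d$-closed smooth form $f$ with boundary value in the sense of currents defines a prolongeable current $[f]$ on $\Omega$, of finite order near the compact boundary $bD$. Granted the global vanishing $\check H^r(\Omega) = 0$ of the de Rham cohomology of prolongeable currents on $\Omega$, one obtains a prolongeable primitive $S$ with $dS=[f]$, extends it to a current $\tilde S$ with compact support in $\bar\Omega$, and applies the Godbillon decomposition
\[ \tilde S = R\tilde S + A d\tilde S + d A\tilde S \]
from~([9], p.~40) to replace $S$ by the candidate primitive $(R\tilde S+Ad\tilde S)_{|\Omega}$. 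This primitive is smooth on $\Omega$ because $R\tilde S$ is $C^\infty$, $A$ does not enlarge the singular support, and $d\tilde S_{|\Omega} = f$ is itself smooth.

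The hard part is establishing the cohomological vanishing $\check H^r(\Omega)=0$. Whereas Theorem~\ref{1} quotes this directly from~[1] for contractible $\Omega$, here it must be derived from the three hypotheses: $D$ contractible, $M$ a contractible extension of $D$, and $H^j(bD)=0$ for $1\le j\le n-2$. The natural route is via the duality recalled in Section~1 (prolongeable currents are the topological dual of smooth compactly supported forms on $\bar\Omega$), which identifies $\check H^r(\Omega)$ with the relative de Rham cohomology $H^r(M,\bar D)$, combined with the long exact sequence of the pair
\[ \cdots \to H^{r-1}(\bar D) \to \check H^r(\Omega) \to H^r(M) \to H^r(\bar D) \to \cdots. \]
Contractibility of $D$ makes $H^r(\bar D)=0$ for $r\ge 1$; the contractible exhaustion of $M$ by domains containing $D$ forces $H^r(M)=0$ for $r\ge 1$ by a direct-limit argument; and a Mayer--Vietoris step on a thickening $M = U_D\cup U_\Omega$ meeting in a collar of $bD$ brings the hypothesis on $H^j(bD)$ into play to close the sequence in the required degrees.

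Once the vanishing is in hand, the boundary-value verification proceeds exactly as in Theorem~\ref{1}. Choose a finite partition of unity $(\varphi_j)_{j\in J}$ subordinate to a cover $(U_j)$ of $\bar\Omega$ by coordinate charts. When $U_j \subset \Omega$ the piece $(A\varphi_j d\tilde S)_{|\Omega}$ is smooth and compactly supported in $\Omega$, hence trivially admits a boundary value; when $U_j$ meets $bD$, the expression $A\varphi_j d\tilde S$ becomes in local coordinates the Newton-kernel convolution of a compactly supported finite-order current that prolongs a smooth form, and~[7] yields the boundary value in the sense of currents. Finiteness of $J$ gives $\tilde H^r(\Omega) = 0$, which is exactly the statement of the lemma. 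The main obstacle, and the step that requires genuine work beyond Theorem~\ref{1}, remains the cohomological vanishing: the pieces $H^r(M)$, $H^r(\bar D)$ and $H^j(bD)$ must be stitched together carefully, with particular attention to the borderline degrees where the assumption $H^j(bD)=0$ for $1\le j\le n-2$ is used sharply.
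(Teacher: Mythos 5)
Your proposal follows the same overall strategy as the paper's proof: obtain a prolongeable primitive from the vanishing of $\check H^r(\Omega)$, correct it by the decomposition $\tilde S = R\tilde S + Ad\tilde S + dA\tilde S$ of ([9], p.~40), and verify the boundary value of $Ad\tilde S_{|\Omega}$ by a finite partition of unity and the Newton-kernel argument of [7], exactly as in Th\'eor\`eme~\ref{1}. The two points of divergence are the following. First, for the key input $\check H^r(\Omega)=0$ the paper simply cites [2] (the reference on prolongeable currents in an annulus $M\setminus\bar D$), whereas you propose to rederive it from the topological hypotheses via the identification of $\check H^r(\Omega)$ with a relative cohomology group, the long exact sequence of the pair $(M,\bar D)$, and a Mayer--Vietoris step on a collar of $bD$; this is a reasonable and arguably more self-contained route, but you leave it as a sketch and flag it yourself as the remaining work, so as written it is the one incomplete step of your argument. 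Second, the paper inserts an auxiliary Lemme~\ref{c} proving that the natural map $\check H^r_k(\Omega)\to\check H^r(\Omega)$ is an isomorphism, i.e.\ that the primitive current can be taken of finite order $k$; this order control is what legitimizes applying the boundary-value results of [7] to $A\varphi_j\,d\tilde S$ in the charts meeting $bD$. You implicitly assume finite order of the primitive ("compactly supported finite-order current") without justifying it; you should either reproduce such a lemma or note that the extension $\tilde S$ chosen with compact support in $\bar\Omega$ is automatically of finite order. Apart from these two points, your argument matches the paper's.
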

\begin{proof}
Soit $f$ une forme diff{\'e}rentielle de classe $C^\infty$, $d$-ferm{\'e}e ayant une valeur au bord au sens des courants sur $\Omega$, d'apr{\`e}s le lemme $4.1$ de $[6]$, $[f]$ est un courant prolongeable d'ordre fini. D'apr{\`e}s $[2]$, $\check{H}^r(\Omega) = 0$.\\ 
Nous avons besoin du lemme suivant pour la suite de la d{\'e}monstration.
\begin{lemma} \label{c}
L'application naturelle
\begin{center}
$\check{H}_k^r(\Omega) \rightarrow \check{H}^r(\Omega)$
\end{center}
est un isomorphisme pour $0 \leq r \leq n$.
\end{lemma}
\begin{proof} [D{\'e}monstration du lemme \ref{c}] \item
\begin{enumerate}
\item[(a)] Injectivit{\'e}: Soit $[T] \in \check{H}_k^r(\Omega)$ tel que $[T] = 0$ dans $\check{H}^r(\Omega)$, il existe $S \in \check{D}^{r-1}(\Omega)$ tel que $dS = T$. Soit $\check{S}$ un prolongement de $S$ {\`a} $M$, consid{\'e}rons le courant $\check{T}= d\check{S}$ qui est un prolongement de $T$ {\`a} $M$. D'apr{\`e}s ($[9]$ page $40$), 
\begin{center}
$\check{S}=R\check{S} +Ad\check{S} + dA\check{S}$\\
$d\check{S} = d(R\check{S} +Ad\check{S})$.
\end{center}
Puisque $A$ n'augmente pas le support singulier donc $Ad\check{S}_{\vert \Omega}$ est d'ordre $k$. $R\check{S}$ est une forme diff{\'e}rentielle de classe $C^\infty$. Ainsi $(R\check{S} +Ad\check{S})_{\vert \Omega}$ est d'ordre $k$ et $d(R\check{S} +Ad\check{S})_{\vert \Omega}= T$. Donc $[T]= 0$ dans $\check{H}_k^r(\Omega)$, d'o{\`u} l'injectivit{\'e} de l'application naturelle.
\item[(b)] Surjectivit{\'e}: Soit $[T] \in  \check{H}^r(\Omega)$ $\Rightarrow$ $dT = 0$, on a
\begin{center}
$T=RT + dAT$.\\
\end{center}
On a $dT=d(RT)= 0$, donc $[RT]\in  \check{H}_k^r(\Omega)$ et $[T] = [RT]$, d'o{\`u} la surjectivit{\'e}.
\end{enumerate}
\end{proof}
Donc  il existe un $(r-1)$-courant prolongeable $u$ d{\'e}fini sur $\Omega$ tel que $du=f$. Soit $S$ un prolongement de $u$ sur $M$ avec $dS = F$ o{\`u} $F$ est un prolongement du courant $[f]$ et donc d'ordre fini $m$.  On a $S=RS +AdS + dAS$.\\
Posons $\check{S} = RS +AdS$, $RS$ {\'e}tant une forme diff{\'e}rentielle de classe $C^\infty$ sur $M$ donc $RS_{\vert \Omega}$ admet une valeur au bord au sens des courants. Puisque $A$ n'augmente pas le support singulier donc $AdS_{\vert \Omega}$ est de classe $C^\infty$ et admet comme dans la d{\'e}monstration du th{\'e}or{\`e}me \ref{1} une valeur au bord au sens des courants.\\
$\check{S}_{\vert \Omega}$ admet aussi une valeur au bord au sens des courants et $d \check{S}_{\vert \Omega} = f$.
\end{proof}
\begin{proof} [D{\'e}monstration du Th{\'e}or{\`e}me \ref{aff}] \item 
Soit $f$ une $(p,q)$-forme diff{\'e}rentielle de classe $C^\infty$, $d$-ferm{\'e}e ayant une valeur au bord au sens des courants sur $\Omega$. D'apr{\`e}s le lemme \ref{aff1}, il existe une $(p+q-1)$-forme diff{\'e}rentielle $h$ de classe $C^\infty$ ayant une valeur au bord au sens des courants telle que $dh = f$. On a $h= h_1+ h_2$ o{\`u} $h_1$ et $h_2$ sont respectivement une $(p-1,q)$-forme diff{\'e}rentielle de classe $C^\infty$ ayant une valeur au bord au sens des courants et une $(p,q-1)$-forme diff{\'e}rentielle de classe $C^\infty$ ayant une valeur au bord au sens des courants. On a $dh= dh_1 +dh_2=f$.\\
Comme $d= \partial + \bar{\partial}$ et pour des raisons de bidegr{\'e} on a $\partial h_2 =0$ et $\bar{\partial} h_1 =0$. D'apr{\`e}s le th{\'e}or{\`e}me \ref{djilo}, $h_1 =\bar{\partial}g_1$ et $h_2= \partial g_2$ avec $g_1$ et $g_2$ des formes diff{\'e}rentielles de classe $C^\infty$ ayant une valeur au bord au sens des courants d{\'e}finies sur $\Omega$. On a
\begin{center}
$f = \partial h_1+ \bar{\partial}h_2= \partial \bar{\partial}g_1 +\bar{\partial}\partial g_2 = \partial \bar{\partial}(g_1 - g_2)$.
\end{center}
Posons $u= g_1 - g_2$, $u$ est une $(p-1,q-1)$-forme diff{\'e}rentielle de classe $C^\infty$ ayant une valeur au bord au sens des courants d{\'e}finie sur $\Omega$ telle que $\partial \bar{\partial} u = f$.\\
\end{proof}
       
\end{document}